\documentclass[12pt]{amsart}

\usepackage{amsmath, amsthm, amsfonts, amssymb}
\usepackage{mathrsfs}

\usepackage[margin=25mm]{geometry}

\newtheorem{proposition}{Proposition}
\newtheorem{theorem}{Theorem}
\newtheorem{cor}{Corollary}
\newtheorem{lemma}{Lemma}

\newtheorem{definition}{Definition}

\newcommand{\ze}{\mathbb{Z}}
\newcommand{\re}{\mathbb{R}}

\def\esp{\mathbb{E}}

\newcommand{\wh}{\mathscr{W}}

\newcommand{\dx}{\partial_x}
\newcommand{\ddx}{\partial^2_x}

\newcommand{\field}{\mathcal{X}}
\newcommand{\mart}{\mathcal{M}}
\newcommand{\spart}{\mathcal{S}}
\newcommand{\apart}{\mathcal{B}}
\newcommand{\mpart}{\widetilde{\mathcal{B}}}

\newcommand{\gradn}{\nabla^n}

\newcommand{\phin}{\varphi^n}
\newcommand{\gradphin}{\gradn \phin}

\newcommand{\cale}{\mathcal{E}}

\newcommand{\arru}{\overrightarrow{u}^l}

\def \simless {\mathbin{\lower 3pt\hbox{$\rlap{\raise 5pt
              \hbox{$\char'074$}}\mathchar"7218$}}}

\title[Scaling of the Sasamoto-Spohn model]{Scaling of the Sasamoto-Spohn model in equilibrium}

\author[Milton Jara and Gregorio R. Moreno Flores]{Milton Jara$^1$ and Gregorio R. Moreno Flores$^2$}

\thanks{ AMS 2000 {\it subject classifications}. Primary  60K35
 secondary 82B20, 60H15
 }

\thanks{{\it Key words and phrases.} 
KPZ Equation, Burgers Equation, Sasamoto-Spohn model}

\thanks{$^1$ Instituto de Matem\'atica Pura e Aplicada. Partially supported by CNPq and FAPERJ}

\thanks{$^2$ Pontificia Universidad Cat\'olica de Chile. Partially supported by Fondecyt grant 1171257 and N\'ucleo Milenio `Modelos Estoc\'asticos de Sistemas Complejos y Desordenados'}

\thanks{This project has received funding from the European Research Council  
(ERC) under the European Union’s Horizon 2020 research and innovative  
programme (grant agreement No 715734)}

\address[Milton Jara]{Instituto de Matem\'atica Pura e Aplicada, Estrada Dona Castorina 110, 22460320 Rio de Janeiro, Brazil}

\email{mjara@impa.br}

\address[Gregorio R. Moreno Flores]{Facultad de Matem\'aticas\\
Pontificia Universidad Cat\'olica de Chile\\
Vicu\~na Mackenna 4860, Macul\\
Santiago, Chile}

\email{grmoreno@mat.uc.cl}


\email{}


\date{}

\begin{document}
	
\begin{abstract} 
	We prove the convergence of the Sasamoto-Spohn model in equilibrium to the energy solution of the stochastic Burgers equation on the whole line. The proof, which relies on the second order Boltzmann-Gibbs principle, follows the approach of \cite{GJS}  and does not use any spectral gap argument.	
\end{abstract}

\maketitle

\section{Model and results}
The goal of this note is to show the convergence of a certain discretization of the stochastic Burgers equation:
\begin{eqnarray}\label{eq:burgers}
	\partial_t u = \tfrac12 \ddx u + \partial_x u^2+\partial_x \wh,
\end{eqnarray}
where $\wh$ is a space-time white noise.
This equation can be seen as the evolution of the slope of solutions to the KPZ equation \cite{KPZ} which is itself a model of an interface in a disordered environment. The KPZ/Burgers equation has been subject to an extensive body of work in the last years.  It appears as the scaling limit of a wide range of particle systems \cite{BG, GJ-arma}, directed polymer models \cite{AKQ, MQR} and interacting diffusions \cite{DGP}, and constitutes a central element in a vast family of models known as the KPZ universality class \cite{C-review, Q-review}. 

Due to the nonlinearity, a lot of care has to be taken to obtain a notion of solution for \eqref{eq:burgers}. There are today several alternatives, for instance, regularity structure \cite{H}, paracontrolled distributions \cite{GP-reloaded} and energy solutions  \cite{GJ-arma, GuJ, GP-uniqueness}, which is the approach we will follow. 

The discretization we consider corresponds to	
\begin{eqnarray}\label{eq:main-eq}
	d u_j &=& \tfrac12\Delta u_j + \gamma B_j(u) + d\xi_j-d\xi_{j-1},
\end{eqnarray}
where $(\xi_j)_j$ is an i.i.d. family of standard one-dimensional Brownian motions,
\begin{eqnarray*}
	\Delta u_j &=& u_{j+1}+u_{j-1}-2u_j,\\
	B_j(u)&=&w_j-w_{j-1} \quad \text{with} \quad w_j =\frac13( u^2_j+u_ju_{j+1}+u^2_{j+1} ).
\end{eqnarray*}
This model, introduced in \cite{KS} (see also \cite{LS}) and further studied in \cite{SS}, is nowadays often referred to as the Sasamoto-Spohn model.

While the discretization of the second derivative and noise are quite straightforward, there are a priori several ways to discretize the nonlinearity in Burgers equation. This particular choice is motivated by two reasons: first, it only involves nearest neighbor sites and, second, it yields the explicit invariant measure $\mu=\rho^{\otimes \ze}$, where
$d\rho(x)=\frac{1}{\sqrt{2\pi}}e^{-x^2/2}dx$ (see Section \ref{sec:generator}).

Our result states the convergence of the discrete equations \eqref{eq:main-eq} to Burgers equation in the sense of energy solutions (see Section \ref{sec:energy} for a precise definition).
\begin{theorem}
For each $n\geq 1$, let $u^n$ be the solution to the system \eqref{eq:main-eq} for $\gamma=n^{-1/4}$ and initial law $\mu$, and let
\begin{eqnarray*}
	\field^n_t(\varphi) = \frac{1}{n^{1/4}} \sum_j u^n_j(tn) \varphi(\tfrac{j}{\sqrt{n}}).
\end{eqnarray*}
The sequence of processes $(\field^n_{\cdot})_{n\geq 1}$ converges in distribution in $C([0,T],\mathcal{S}'(\re))$
to the unique energy solution of the Burgers equation.
\end{theorem}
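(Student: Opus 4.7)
The plan is to follow the energy-solution program of \cite{GJS}. First, Itô's formula applied to $\field^n_t(\varphi)$ (with time rescaling $t\mapsto tn$) yields the decomposition
\begin{align*}
\field^n_t(\varphi) \;=\; \field^n_0(\varphi) + \int_0^t \mathcal{A}^n_s(\varphi)\,ds + \int_0^t \mathcal{B}^n_s(\varphi)\,ds + \mart^n_t(\varphi),
\end{align*}
where $\mathcal{A}^n$ collects the contribution of the discrete Laplacian and $\mathcal{B}^n$ that of the nonlinear term $B_j$. A Taylor expansion shows that $\mathcal{A}^n_s(\varphi) = \tfrac12\field^n_s(\lapln\varphi) \to \tfrac12\field_s(\varphi'')$. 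The predictable quadratic variation of $\mart^n$ is computed explicitly from the gradient noises $d\xi_j-d\xi_{j-1}$ and converges under stationarity to $t\|\varphi'\|_{L^2}^2$, yielding $\dx\wh$ in the limit. A summation by parts rewrites $\mathcal{B}^n_s(\varphi) = -n^{3/4}\gamma\sum_j w_j(u^n(sn))\,\gradn\varphi(j/\sqrt n)$, and the choice $\gamma = n^{-1/4}$ is precisely calibrated so that the effective prefactor becomes $n^{1/2}$, producing an instantaneous size of order $n^{1/4}$ whose time integral is critical, i.e.\ $O(1)$.

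The heart of the argument is to identify the limit of $\int_0^t \mathcal{B}^n_s(\varphi)\,ds$ with the Burgers nonlinearity $\int_0^t \field^2_s(\varphi')\,ds$ in the energy-solution sense of \cite{GP-uniqueness}. Centering $w_j$ by $\esp_\mu[w_j]=2/3$, this reduces to a second-order Boltzmann--Gibbs principle: the space-time additive functional of the centered local quadratic observable $w_j - 2/3$ should be $L^2(\mu)$-close to the corresponding functional of the square of the density field averaged over a mesoscopic box of size $\ell=\ell(n)$ with $1\ll\ell\ll\sqrt n$. I would establish this via a Kipnis--Varadhan bound for non-reversible dynamics, which controls additive functionals using only the symmetric part of the generator; here that symmetric part is the Ornstein--Uhlenbeck operator preserving the iid Gaussian product measure $\mu$, for which sharp Dirichlet-form estimates follow from Gaussian integration by parts.

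The main obstacle is precisely this Boltzmann--Gibbs estimate in the continuous-spin Gaussian setting: the one-block/two-block reductions of \cite{GJS}, originally tailored to lattice gases with finite local state spaces, do not apply verbatim. The natural route is an iterative doubling scheme on the block size, combined with a Wiener chaos decomposition of $w_j$; the pure second-chaos part of $w_j - 2/3$ is what yields the Burgers nonlinearity, while the absence of higher chaos components actually simplifies the analysis relative to more general quadratic observables. Once the replacement is in place, tightness of $(\field^n)$ in $C([0,T],\mathcal{S}'(\re))$ follows from Mitoma's criterion and the uniform $L^2$ bounds above. Running the same decomposition on the time-reversed process, whose antisymmetric drift flips sign while $\mu$ remains invariant, supplies the backward martingale axiom required by an energy solution. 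Uniqueness of energy solutions \cite{GP-uniqueness} then identifies the limit as the stochastic Burgers equation \eqref{eq:burgers}.
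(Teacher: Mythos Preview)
Your overall plan matches the paper's: decompose $\field^n$ into symmetric, antisymmetric, and martingale parts, prove tightness via Mitoma plus Kolmogorov, identify each limit, and invoke uniqueness of energy solutions. The substantive divergence is in how you propose to prove the second-order Boltzmann--Gibbs principle.

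You flag the continuous-spin Gaussian setting as the main obstacle and propose an iterative block-doubling scheme combined with a Wiener-chaos decomposition of $w_j$. The paper's route is considerably more direct and needs neither device. First, a single Kipnis--Varadhan estimate (using Gaussian integration by parts to turn $u_j(u_{j+1}-u_j)f$ into $(\partial_{j+1}-\partial_j)(u_j f)$) shows that $w_j$ may be replaced by $u_j u_{j+1}$ up to a vanishing error; this disposes of the diagonal terms $u_j^2,u_{j+1}^2$ without any chaos expansion. Then the replacement of $u_j u_{j+1}$ by $(\arru_j)^2 - 1/l$ proceeds via the algebraic factorization
\[
u_j u_{j+1} - \tau_j Q(l,u) \;=\; u_j\bigl(u_{j+1}-\arru_j\bigr) + \arru_j\bigl(u_j - \arru_j\bigr) + \tfrac{1}{l},
\]
and each piece is controlled by a \emph{single} Kipnis--Varadhan/integration-by-parts step, yielding the bound $Ctl/\sqrt{n}$ directly at scale $l$. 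No multiscale iteration is required: Gaussian integration by parts converts every factor $(u_k-u_{k+1})$ into $(\partial_k-\partial_{k+1})$ acting on the test function, which is then paired with the Dirichlet form by Young's inequality. This is exactly the simplification that the product Gaussian measure affords over finite-state lattice gases, so the obstacle you anticipate largely dissolves.

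A secondary point: the paper's definition of energy solution (following \cite{GJ-arma}) does not include a backward-martingale axiom, and no time-reversal argument appears; the paper only checks (S), (EC1), (EC2) and the forward martingale problem. Whether the uniqueness result of \cite{GP-uniqueness} you and the paper both cite actually requires the reversed condition is a separate issue, but that step is not part of the paper's own argument.
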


A similar result was shown in \cite{GP-reloaded} for much more general initial conditions although restricted to the one-dimensional torus.

At the technical level, our approach relies on the techniques of \cite{GJS} and avoids the use of any spectral gap estimate. The core of the proof consists in deriving certain dynamical estimates among which the so-called second order Boltzmann-Gibbs principle plays a major role. A key ingredient is a certain integration-by-parts satisfied by the model. 

The paper is organized as follows: in Section \ref{sec:energy}, we recall the notion of energy solution from \cite{GJ-arma}. We show the invariance of the measure $\mu$ in Section \ref{sec:generator}. In Section \ref{sec:BG}, we prove the dynamical estimates. Finally, in Sections \ref{sec:tightness} and \ref{sec:convergence}, we show, respectively, tightness and convergence to the energy solution. The construction of the dynamics \eqref{eq:main-eq} is given in the appendix. 

\vspace{2ex}

\noindent \textsc{Notations:} We denote by $\mathcal{S}(\re)$ the space of Schwarz functions on $\re$. For $n\geq 1$ and a smooth function $\varphi$, we define $\phin_j = \varphi(\tfrac{j}{\sqrt{n}})$, $\gradphin_j=\sqrt{n}(\phin_{j+1}-\phin_j)$ and $\Delta^n \phin_j = n (\phin_{j+1}+\phin_{j-1}-2\phin)$. We also define
\begin{eqnarray*}
	\cale(\varphi) = \int \varphi^2(x)\, dx,\quad \cale_n(\psi) = \frac{1}{\sqrt{n}}\sum_{j\in\ze}\psi_j^2,
\end{eqnarray*}
respectively, for $\varphi\in L^2(\re)$ and $\psi \in l^2(\ze)$.


\section{Energy solutions of the Burgers equation}\label{sec:energy}
We will introduce the notion of an energy solution for Burgers equation \cite{GJ-arma}. We start with two definitions:

\begin{definition}
	We say that a process $\{u_t:\, t\in[0,T]\}$ satisfies condition (S) if, for all $t\in[0,T]$, the $\mathcal{S}'(\re)$-valued random variable $u_t$ is a white noise of variance $1$.
\end{definition}

For a stationary process $\{u_t:\, t\in[0,T]\}$, $0\leq s<t\leq T$, $\varphi\in\mathcal{S}(\re)$ and $\varepsilon>0$, we define
\begin{eqnarray*}
	\mathcal{A}^{\varepsilon}_{s,t}(\varphi) = \int^t_s \int_{\re} u_r(i_{\varepsilon}(x))^2 \dx\varphi(x)dxdr
\end{eqnarray*}
where $i_{\varepsilon}(x)=\varepsilon^{-1}{\bf 1}_{(x,x+\varepsilon]}$

\begin{definition}
	Let $\{u_t:\, t\in[0,T]\}$ be a process satisfying condition (S). We say that $\{u_t:\, t\in[0,T]\}$ satisfies the energy estimate if there exists a constant $\kappa>0$ such that:
	
	\vspace{1ex}
	
	\noindent (EC1) For any $\varphi\in\mathcal{S}(\re)$ and any $0\leq s<t\leq T$,
							\begin{eqnarray*}
								\esp\left[ \left| \int^t_s u_r(\ddx\varphi)\,dr\right|^2\right] \leq \kappa (t-s)\cale(\dx \varphi)
							\end{eqnarray*}

	\noindent (EC2) For any $\varphi\in\mathcal{S}(\re)$, any $0\leq s<t\leq T$ and any $0<\delta<\varepsilon <1$,
							\begin{eqnarray*}
								\esp\left[ \left| \mathcal{A}^{\varepsilon}_{s,t}(\varphi)-\mathcal{A}^{\delta}_{s,t}(\varphi) \right|^2\right] \leq \kappa (t-s)\varepsilon\cale(\dx \varphi)
							\end{eqnarray*}
\end{definition}
We state a theorem proved in \cite{GJ-arma}:
\begin{theorem}\label{thm:S-EC-imply}
	Assume $\{u_t:\, t\in[0,T]\}$ satisfies (S) and (EC2).
	There exists an $\mathcal{S}'(\re)$-valued stochastic process $\{\mathcal{A}_t:\, t\in[0,T]\}$ with continuous paths such that
	\begin{eqnarray*}
		\mathcal{A}_t(\varphi) = \lim_{\varepsilon\to0}\mathcal{A}^{\varepsilon}_{0,t}(\varphi),
	\end{eqnarray*}
	in $L^2$, for any $t\in[0,T]$ and $\varphi\in\mathcal{S}(\re)$.
\end{theorem}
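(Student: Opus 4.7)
The plan is a Cauchy construction in $L^2$, followed by a Schwartz-regularisation step and a Kolmogorov-type argument for path continuity. First, fix $\varphi\in\mathcal{S}(\re)$ and $t\in[0,T]$. For $0<\delta<\varepsilon<1$, (EC2) gives $\|\mathcal{A}^\varepsilon_{0,t}(\varphi)-\mathcal{A}^\delta_{0,t}(\varphi)\|_{L^2}^2\leq\kappa t\,\varepsilon\,\cale(\dx\varphi)$, so on the dyadic scale $\varepsilon_k=2^{-k}$ the series $\sum_k\|\mathcal{A}^{\varepsilon_{k+1}}_{0,t}(\varphi)-\mathcal{A}^{\varepsilon_k}_{0,t}(\varphi)\|_{L^2}$ converges geometrically. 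This produces an $L^2$-limit $\mathcal{A}_t(\varphi)$; a sandwich argument with (EC2) shows the convergence holds along the full net $\varepsilon\downarrow 0$ and yields the quantitative tail bound $\|\mathcal{A}^\varepsilon_{0,t}(\varphi)-\mathcal{A}_t(\varphi)\|_{L^2}^2\leq\kappa t\,\varepsilon\,\cale(\dx\varphi)$.

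Linearity of $\varphi\mapsto\mathcal{A}_t(\varphi)$ is inherited from $\mathcal{A}^\varepsilon_{0,t}$. Combining the previous estimate at $\varepsilon=1$ with a direct computation of $\|\mathcal{A}^1_{0,t}(\varphi)\|_{L^2}$ --- obtained via Cauchy--Schwarz in time, stationarity and (S) applied to the quadratic Gaussian functional $\int u_r(i_1(x))^2\,\dx\varphi(x)\,dx$ --- yields a bound $\|\mathcal{A}_t(\varphi)\|_{L^2}\leq C(T)\,\mathcal{N}(\varphi)$ for a suitable Schwartz seminorm $\mathcal{N}$. The resulting continuity of $\varphi\mapsto\mathcal{A}_t(\varphi)$ from the nuclear space $\mathcal{S}(\re)$ into $L^2$, together with $\bP$-a.s.\ linearity on a countable dense family, furnishes by the standard nuclear-space regularisation an $\mathcal{S}'(\re)$-valued random variable realising $\mathcal{A}_t$ for each fixed $t$.

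Running the same Cauchy argument on an arbitrary increment $[s,t]$ identifies $\mathcal{A}_t(\varphi)-\mathcal{A}_s(\varphi)$ as the $L^2$-limit of $\mathcal{A}^\varepsilon_{s,t}(\varphi)$ and yields $\|\mathcal{A}_t(\varphi)-\mathcal{A}_s(\varphi)\|_{L^2}^2\leq C(\varphi)(t-s)$. The main obstacle is that this bare $L^2$-modulus in $(t-s)^{1/2}$ is by itself too weak to feed into Kolmogorov's continuity criterion. I would overcome this by exploiting the quadratic structure of the approximants: for each fixed $r$, the integrand $\int u_r(i_\varepsilon(x))^2\,\dx\varphi(x)\,dx$ is a degree-$2$ polynomial in the Gaussian white noise $u_r$, so each $\mathcal{A}^\varepsilon_{s,t}(\varphi)$, and therefore its $L^2$-limit $\mathcal{A}_t(\varphi)-\mathcal{A}_s(\varphi)$, lies in the second Wiener chaos of the underlying Gaussian field. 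Gaussian hypercontractivity then upgrades the estimate to $\|\mathcal{A}_t(\varphi)-\mathcal{A}_s(\varphi)\|_{L^p}\leq C_p(\varphi)\sqrt{t-s}$ for every $p\geq 2$. Taking $p=4$ produces a fourth-moment bound of order $(t-s)^2$, Kolmogorov's criterion gives a continuous modification $t\mapsto\mathcal{A}_t(\varphi)$ for each $\varphi$, and performing this simultaneously over a countable dense family in $\mathcal{S}(\re)$ and extending by linearity using the seminorm control from the previous paragraph produces the desired continuous $\mathcal{S}'(\re)$-valued process $(\mathcal{A}_t)_{t\in[0,T]}$.
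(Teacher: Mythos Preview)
The paper does not give its own proof of this statement; it merely quotes the result from \cite{GJ-arma}, so there is nothing in the paper to compare against. Your Cauchy construction, the Schwartz-seminorm control of $\varphi\mapsto\mathcal{A}_t(\varphi)$, and the nuclear-space regularisation are all sound. The path-continuity argument, however, has a genuine gap.

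Condition (S) asserts only that each \emph{fixed-time marginal} $u_r$ is a spatial white noise; it says nothing about the joint law of $(u_r)_{r\in[0,T]}$, which for a solution of the nonlinear Burgers equation is certainly not Gaussian. Consequently there is no ``underlying Gaussian field'' whose second Wiener chaos could contain the time-integrated variable $\mathcal{A}^\varepsilon_{s,t}(\varphi)=\int_s^t F^\varepsilon(u_r)\,dr$, and hypercontractivity cannot be invoked to upgrade your $L^2$ increment bound to an $L^p$ bound. Applying hypercontractivity to $F^\varepsilon(u_r)$ at each fixed $r$ and then Minkowski in time only yields $\|\mathcal{A}^\varepsilon_{s,t}(\varphi)\|_{L^p}\lesssim (t-s)$, which gains nothing.

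The standard fix---and the one used in \cite{GJ-arma}---is to sharpen the $L^2$ increment bound directly by optimising over $\varepsilon$. From (S) and a Wick computation one has $\|F^\varepsilon(u_0)\|_{L^2}^2\leq C\varepsilon^{-1}\cale(\partial_x\varphi)$, so Cauchy--Schwarz in time gives $\|\mathcal{A}^\varepsilon_{s,t}(\varphi)\|_{L^2}^2\leq C(t-s)^2\varepsilon^{-1}\cale(\partial_x\varphi)$. Combined with the tail estimate $\|\mathcal{A}_{s,t}(\varphi)-\mathcal{A}^\varepsilon_{s,t}(\varphi)\|_{L^2}^2\leq\kappa(t-s)\varepsilon\,\cale(\partial_x\varphi)$ that you already derived from (EC2), choosing $\varepsilon\sim(t-s)^{1/2}$ yields
\[
\esp\bigl[\,|\mathcal{A}_t(\varphi)-\mathcal{A}_s(\varphi)|^2\,\bigr]\leq C(t-s)^{3/2}\cale(\partial_x\varphi),
\]
and Kolmogorov's criterion applies with $p=2$ without any appeal to higher moments.
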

We are now ready to formulate the definition of an energy solution:

\begin{definition}
	We say that $\{u_t:\, t\in[0,T]\}$ is a stationary energy solution of the Burgers equation if
	\begin{itemize}
		\item $\{u_t:\, t\in[0,T]\}$ satisfies (S), (EC1) and (EC2).
		
		\item For all $\varphi\in\mathcal{S}(\re)$, the process
				\begin{eqnarray*}
					u_t(\varphi)-u_0(\varphi)-\tfrac12 \int^t_0 u_s(\ddx \varphi)\,ds- \mathcal{A}_t(\varphi)
				\end{eqnarray*}
				is a martingale with quadratic variation $t\cale(\dx\varphi)$, where $\mathcal{A}$ is the process from Theorem \ref{thm:S-EC-imply}.
	\end{itemize}
\end{definition}
Existence of energy solutions was proved in \cite{GJ-arma}. Uniqueness was proved in \cite{GP-uniqueness}.


\section{Generator and invariant measure}\label{sec:generator}
The construction of the dynamics given by \eqref{eq:main-eq} is detailed in Appendix \ref{sec:construction}. We denote by $\mathscr{C}$ the set of cylindrical functions $F$ of the form $F(u) = f(u_{-n},\cdots, u_n)$,
for some $n\geq 0$, with $f\in C^2(\re^{2n+1})$ with polynomial growth of its partial derivatives up to order $2$.
The generator of the dynamics \eqref{eq:main-eq} acts on $\mathscr{C}$  as
\begin{eqnarray*}
	L = \sum_{j}\left\{
			\frac12 (\partial_{j+1}-\partial_j)^2-\frac12(u_{j+1}-u_j)(\partial_{j+1}-\partial_j)+ \gamma B_j(u)\partial_j
		\right\},
\end{eqnarray*}
where $\partial_j = \frac{\partial}{\partial u_j}$. Let us introduce the operators
\begin{eqnarray*}
	S &=& \sum_{j}\left\{
			\frac12 (\partial_{j+1}-\partial_j)^2-\frac12(u_{j+1}-u_j)(\partial_{j+1}-\partial_j)
		\right\}, \quad
	A =\sum_{j}\gamma B_j(u)\partial_j,
\end{eqnarray*}
which formally correspond to the symmetric and anti-symmetric parts of $L$ with respect to $\mu=\rho^{\otimes \ze}$, where
$d\rho(x)=\frac{1}{\sqrt{2\pi}}e^{-x^2/2}dx$. We note that our model satisfies the Gaussian integration-by-parts formula:
\begin{eqnarray*}
	\int u_j f d\mu = \int\partial_j f d\mu, 
\end{eqnarray*}
which will be heavily used in the sequel.

We will also consider the periodic model $u^M$ on $\ze_M:=\ze \slash M\ze$ and denote by $L_M,\, S_M$ and $A_M$ the corresponding generator and its symmetric and anti-symmetric parts respectively. Finally, denote $\mu_M = \rho^{\otimes \ze_M}$ and let $\rho_M$ be its density.

\begin{lemma}
	The measure $\mu_M$ is invariant for the periodic dynamics $u^M$.
\end{lemma}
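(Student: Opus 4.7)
The plan is to verify the infinitesimal invariance criterion $\int L_M F \, d\mu_M = 0$ for every cylindrical $F$, which for the finite-dimensional Markov diffusion constructed in the appendix is equivalent to invariance of $\mu_M$. Splitting $L_M = S_M + A_M$, I would treat the symmetric and antisymmetric parts separately, relying throughout on the Gaussian integration-by-parts formula $\int u_j f\, d\mu_M = \int \partial_j f \, d\mu_M$ already pointed out in the text.

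For the symmetric part, set $D_j := \partial_{j+1} - \partial_j$ and note that its $L^2(\mu_M)$-adjoint is $D_j^* = -D_j + (u_{j+1}-u_j)$. A one-line computation then gives
\[
-\tfrac12 D_j^* D_j \;=\; \tfrac12 D_j^2 - \tfrac12 (u_{j+1}-u_j)\, D_j,
\]
so that $S_M = -\tfrac12 \sum_{j \in \ze_M} D_j^* D_j$. This makes $S_M$ manifestly self-adjoint and non-positive in $L^2(\mu_M)$; since it annihilates constants, $\int S_M F \, d\mu_M = \int F \cdot S_M 1 \, d\mu_M = 0$.

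For the antisymmetric part, Gaussian IBP coordinatewise yields
\[
\int A_M F \, d\mu_M \;=\; \gamma \sum_j \int B_j(u)\, \partial_j F \, d\mu_M \;=\; \gamma \int F \sum_j \bigl( u_j B_j - \partial_j B_j \bigr)\, d\mu_M,
\]
reducing the claim to the pointwise ``zero-divergence'' identity $\sum_{j \in \ze_M}(u_j B_j - \partial_j B_j) = 0$ on the torus. A direct differentiation gives $\partial_j B_j = \tfrac13 (u_{j+1} - u_{j-1})$, which telescopes to $0$ when summed over $\ze_M$. For the remaining piece, a summation by parts rewrites $\sum_j u_j B_j(u) = \sum_j (u_j - u_{j+1})\, w_j$, and the key algebraic identity
\[
(u_j - u_{j+1})\, w_j \;=\; \tfrac13 (u_j - u_{j+1})(u_j^2 + u_j u_{j+1} + u_{j+1}^2) \;=\; \tfrac13 \bigl( u_j^3 - u_{j+1}^3 \bigr)
\]
collapses the sum to $0$ on the torus. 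Combining the two parts gives $\int L_M F\, d\mu_M = 0$, hence invariance of $\mu_M$.

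The only substantive step is spotting and exploiting the cubic identity $a^3 - b^3 = (a-b)(a^2+ab+b^2)$; this is exactly what singles out the Sasamoto-Spohn choice $w_j = \tfrac13(u_j^2 + u_j u_{j+1} + u_{j+1}^2)$ among competing nearest-neighbor discretizations of $u^2$ and is, in that sense, the heart of the lemma. A minor piece of book-keeping that should also be addressed is that $\mathscr{C}$ (restricted to $\ze_M$) forms a core for $L_M$, but this is standard for smooth diffusions with polynomial coefficients in finite dimensions with an explicit Gaussian stationary candidate.
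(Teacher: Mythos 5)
Your proof is correct and follows essentially the same route as the paper: reduce invariance to $\int L_M f\, d\mu_M = 0$, handle $S_M$ by exhibiting its symmetry with respect to $\mu_M$ (your Dirichlet-form factorization $S_M=-\tfrac12\sum_j D_j^*D_j$ is equivalent to the paper's computation $S_M^{\dagger}\rho_M\equiv 0$), and kill $A_M$ by Gaussian integration by parts plus telescoping on the torus. If anything, your treatment of the antisymmetric part is more explicit than the paper's (which only asserts the existence of telescoping polynomials), since you exhibit the cubic identity $(u_j-u_{j+1})w_j=\tfrac13(u_j^3-u_{j+1}^3)$ that makes the cancellation work.
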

\begin{proof}
	The lemma follows from Echeverr\'ia's criterion (\cite{EK}, Thm 4.9.17) once we show
	\begin{eqnarray*}
		\int L_Mf\, d\mu_M = 0,
	\end{eqnarray*}
	for all $f\in C^2(\re^{\ze_M})$ with polynomial growth of its derivatives up to order $2$.
	By standard integration-by-parts,
	\begin{eqnarray*}
		\int S_Mf\, d\mu_M 
				&=& \int f(u) S_M^{\dagger} \rho_M(u)\,du_{-M}\cdots du_M,
	\end{eqnarray*}
	where
	\begin{eqnarray*}
		S^{\dagger}_M = \frac12 \sum_{j\in\ze_M} \left\{ (\partial_{j+1}-\partial_j)^2 + (u_j-u_{j+1})(\partial_j-\partial_{j+1})+2\right\}.
	\end{eqnarray*}
	It is a simple computation to show that $S^{\dagger}_M\rho_M \equiv 0$. It then remains to verify that 
	\begin{eqnarray*}
		\int A_Mf\, d\mu_M = \int \sum_{j\in \ze_M}(w_j-w_{j-1})\partial_j f(u)\rho_M(u)\, du_{-M}\cdots du_M= 0.
	\end{eqnarray*}
	But, using standard integration-by-parts once again, we can verify that there exists a degree three polynomial in two variables $p(\cdot,\cdot)$ such that
	\begin{eqnarray*}
		\int A_Mf\, d\mu_m = \int \sum_{j\in\ze_M} f(u) \left\{ p(u_j,u_{j+1})-p(u_{j-1},u_{j})\right\}d\mu_M.
	\end{eqnarray*}
	Finally, Gaussian integration-by-parts yields a degree two polynomial in two variables $\tilde p(\cdot,\cdot)$ such that
	\begin{eqnarray*}
		\int A_Mf\, d\mu_M = \int \sum_{j\in\ze_M}
		\left\{ \tilde p(\partial_j,\partial_{j+1})-\tilde p(\partial_{j-1},\partial_{j})\right\}f(u)\, d\mu,
	\end{eqnarray*}
	which is telescopic. This ends the proof.
\end{proof}
By construction of the infinite volume dynamics and taking the limit $M\to\infty$, we obtain
\begin{cor}
	The measure $\mu$ is invariant for the dynamics \eqref{eq:main-eq}.
\end{cor}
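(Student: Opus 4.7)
The strategy is to transfer the invariance of $\mu_M$ for the periodic dynamics $u^M$, just established, to the infinite volume dynamics $u$ constructed in Appendix \ref{sec:construction}, by letting $M\to\infty$. Let $P_t$ and $P^M_t$ denote the semigroups associated with $u$ and $u^M$ respectively.

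First, I pick a cylinder function $F\in\mathscr{C}$ depending only on the coordinates $u_j$ with $|j|\le n$, and fix any $M>n$. Since $\mu$ and $\mu_M$ have identical marginals (a product of standard Gaussians) on the sites $\{-n,\ldots,n\}$, we have $\int F\, d\mu_M=\int F\, d\mu$. The preceding lemma then gives
\begin{eqnarray*}
\int P^M_t F\, d\mu_M \;=\; \int F\, d\mu_M \;=\; \int F\, d\mu.
\end{eqnarray*}

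Second, I would invoke the construction of the infinite volume dynamics in Appendix \ref{sec:construction}: $u$ is built as the $M\to\infty$ limit of the periodic dynamics $u^M$ driven by the same family $(\xi_j)_j$ of Brownian motions, with initial data sampled from the product Gaussian law and agreeing on a growing box. The uniform-in-$M$ Gaussian moment bounds on $u^M_j(t)$ available from the stationarity of $\mu_M$ provide the integrability needed to pass to the limit through the quadratic nonlinearity $B_j$, so
\begin{eqnarray*}
\int P^M_t F\, d\mu_M \;\longrightarrow\; \int P_t F\, d\mu \qquad\text{as }M\to\infty.
\end{eqnarray*}
Combining the two displays yields $\int P_t F\, d\mu=\int F\, d\mu$ for every $F\in\mathscr{C}$. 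Since $\mathscr{C}$ is measure-determining on $\re^{\ze}$, the invariance of $\mu$ follows.

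The core difficulty, which is handled in the appendix, is the quantitative control of the approximation $u^M\to u$ on a fixed time window and on a fixed finite set of coordinates. The nonlinear drift $\gamma B_j(u)$ generates quadratic interaction terms whose $L^2$ norm must be controlled uniformly in $M$; this is done precisely by exploiting the stationarity of $\mu_M$ to obtain explicit Gaussian marginal bounds, and then comparing $u^M$ and $u$ through their common driving noises via a Gr\"onwall-type estimate. Everything beyond this pathwise approximation step is a straightforward passage to the limit.
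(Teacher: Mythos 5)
Your proposal is correct and follows essentially the same route as the paper, which proves the corollary in one line by invoking the invariance of $\mu_M$ for the periodic dynamics and passing to the limit $M\to\infty$ through the construction of Appendix \ref{sec:construction}. The only slight mismatch is that the appendix obtains $u$ from $u^M$ by tightness and subsequential convergence in law (via stationary moment bounds and Kolmogorov's criterion), not by a pathwise Gr\"onwall comparison with common driving noises, but this does not affect the argument.
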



\section{The second-order Boltzmann-Gibbs principle}\label{sec:BG}
We recall the Kipnis-Varadhan inequality: there exists $C>0$ such that
\begin{eqnarray}\label{eq:KV}
	\esp\left[\sup_{0\leq t \leq T} \left| \int^t_0 F(u(sn))\, ds\right|^2\right]
	\leq C T ||F(\cdot)||_{-1,n}^2ds,
\end{eqnarray}
where the $||\cdot||_{-1,n}$-norm is defined through the variational formula
\begin{eqnarray*}
	||F||_{-1,n}^2 = \sup_{f\in \mathscr{C}}\left\{ 2 \int F(u)fd\mu+n \int f Lf d\mu\right\}
\end{eqnarray*}
The proof of this inequality in our context follows from a straightforward modification of the arguments of \cite{GP-uniqueness}, Corollary 3.5.
In our particular model, we have
\begin{eqnarray*}
	-\int f Lf d\mu = \frac{1}{2}\sum_j \int \left( (\partial_{j+1}-\partial_j)f\right)^2d\mu
\end{eqnarray*}
so that the variational formula becomes
\begin{eqnarray*}
	||F||_{-1,n}^2 = \sup_{f\in \mathscr{C}}\left\{ 2 \int F(u)fd\mu-\frac{n}{2}\sum_j \int \left( (\partial_{j+1}-\partial_j)f\right)^2d\mu\right\}.
\end{eqnarray*}
Denote by $\tau_j$ the canonical shift $\tau_j u_{i}=u_{j+i}$ and let $\arru_j = \frac{1}{l}\sum^l_{k=1}u_{j+k}$.

\begin{lemma}\label{thm:one-block} Let $l\geq 1$ and let $g$ be a function with zero mean with respect to $\mu$ which support does not intersect $\{1,\cdots,l\}$. Let $g_j(s) = g(\tau_j u(s))$. There exists a constant $C>0$ such that
	\begin{eqnarray}\label{eq:one-block}
		\esp\left[ \left| \int^t_0ds\sum_j g_j(sn)[u_{j+1}(sn)-\arru_j(sn)]\varphi_j\right|^2\right]
		\leq
		C \frac{tl}{\sqrt{n}}||g||^2_{L^2(\mu)}\cale_n(\varphi)
	\end{eqnarray}
\end{lemma}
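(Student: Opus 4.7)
\noindent\textit{Proof plan.} The plan is to apply the Kipnis--Varadhan inequality \eqref{eq:KV} to reduce the lemma to the static estimate $\|F\|_{-1,n}^2 \leq C(l/\sqrt{n})\,\|g\|_{L^2(\mu)}^2\,\cale_n(\varphi)$, where $F(u) = \sum_j g_j(u)[u_{j+1}-\arru_j]\varphi_j$. By the variational formula just stated and the explicit form of the Dirichlet form, this is equivalent to producing, uniformly in $f\in\mathscr{C}$, a bound
\begin{equation*}
2\int F f\,d\mu \;\leq\; C\,\frac{l}{\sqrt{n}}\,\|g\|_{L^2(\mu)}^2\,\cale_n(\varphi) \;+\; \frac{n}{2}\sum_i \int\bigl((\partial_{i+1}-\partial_i)f\bigr)^2 d\mu.
\end{equation*}

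The first step is the telescoping identity $u_{j+1}-\arru_j = -\tfrac{1}{l}\sum_{m=1}^{l-1}(l-m)(u_{j+m+1}-u_{j+m})$. Since the support of $g$ avoids $\{1,\dots,l\}$, $g_j$ does not depend on any of $u_{j+1},\dots,u_{j+l}$, and the Gaussian integration-by-parts formula of Section~\ref{sec:generator} gives $\int g_j(u_{j+m+1}-u_{j+m})f\,d\mu = \int g_j\,\nabla_{j+m}f\,d\mu$ with $\nabla_i f := \partial_{i+1}f-\partial_i f$. Relabelling $i=j+m$ yields the clean representation
\begin{equation*}
\int F f\,d\mu \;=\; -\frac{1}{l}\sum_{i\in\ze}\int H_i\,\nabla_i f\,d\mu, \qquad H_i \;:=\; \sum_{m=1}^{l-1}(l-m)\,g_{i-m}\,\varphi_{i-m},
\end{equation*}
and a sitewise Cauchy--Schwarz followed by Young's inequality calibrated to the Dirichlet form gives
\begin{equation*}
2\int F f\,d\mu \;\leq\; \frac{2}{l^2 n}\sum_i \|H_i\|_{L^2(\mu)}^2 \;+\; \frac{n}{2}\sum_i \int(\nabla_i f)^2 d\mu.
\end{equation*}

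Everything then reduces to proving $\sum_i \|H_i\|_{L^2(\mu)}^2 \leq C\,l^3\,\|g\|_{L^2(\mu)}^2\,\|\varphi\|_{\ell^2}^2$, which combined with $\|\varphi\|_{\ell^2}^2 = \sqrt{n}\,\cale_n(\varphi)$ closes the bound on $\|F\|_{-1,n}^2$. Expanding the square and using the stationarity of $\mu$, $\sum_i\|H_i\|^2 = \sum_k \gamma(k)\,R_\varphi(k)\,C_l(k)$, where $\gamma(k):=\int g\cdot(g\circ\tau_k)\,d\mu$, $R_\varphi(k):=\sum_j \varphi_j\varphi_{j+k}$, and $C_l(k):=\sum_{m-m'=k}(l-m)(l-m')$ is $O(l^3)$ in $k$ and supported in $|k|\leq l$. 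The decisive input is that, because $\mu$ is a product measure and $g$ has mean zero, $\gamma(k)=0$ as soon as $k$ leaves the difference set of the support of $g$, so $\sum_k|\gamma(k)|$ stays comparable to $\|g\|_{L^2(\mu)}^2$; together with $|R_\varphi(k)|\leq \|\varphi\|_{\ell^2}^2$ and the $O(l^3)$ bound on $C_l(k)$, this gives the desired estimate. This last step is where I expect the real work to lie: a brute-force Cauchy--Schwarz on the inner sum defining $H_i$ treats all cross terms in $(m,m')$ equally and only yields $\sum_i\|H_i\|^2 = O(l^4)$, which would worsen the rate to $tl^2/\sqrt{n}$. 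Extracting the sharp $tl/\sqrt{n}$ requires honestly exploiting the cancellation in $\gamma$ coming from the zero-mean property of $g$ and the independence of coordinates under $\mu$.
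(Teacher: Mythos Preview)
Your approach is essentially identical to the paper's: the same telescoping of $u_{j+1}-\arru_j$, the same Gaussian integration by parts (using that the support of $g$ misses $\{1,\dots,l\}$), and the same Young-inequality splitting against the Dirichlet form, with your $H_i/l$ coinciding with the paper's $F_k=\sum_{i=1}^{l-1}\varphi_{k-i}g_{k-i}\psi_i$. The only difference is that the paper compresses your careful discussion of $\sum_i\|H_i\|_{L^2(\mu)}^2$ into the single phrase ``as $g$ is centered'' to drop the cross terms and obtain $\sum_k\int F_k^2\,d\mu\le \sum_k\sum_i\varphi_{k-i}^2\|g\|_{L^2(\mu)}^2$; your analysis via $\gamma(k)$ makes explicit why the naive $O(l^4)$ bound can be improved to $O(l^3)$, which is exactly the content of that remark (with the hidden constant depending on the support of $g$).
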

\begin{proof}
	Let $\psi_i = \frac{l-i}{l}$, $i=0,\cdots, l-1$. Then,
	\begin{eqnarray*}
		u_{j+1}-\arru_j = \sum^{l-1}_{i=1}(u_{j+i}-u_{j+i+1})\psi_i.
	\end{eqnarray*}
	Hence,
	\begin{eqnarray*}
		\sum_j \varphi_j g_j (u_{j+1}-\arru_j)
		&=&
		\sum_j \varphi_j g_j\sum^{l-1}_{i=0}(u_{j+i}-u_{j+i+1})\psi_i\\
		&=&
		\sum_k \left( \sum^{l-1}_{i=1}\varphi_{k-i}g_{k-i}\psi_i\right)(u_k-u_{k+1})\\
		&=:&
		\sum_k F_k(u_k-u_{k+1})
	\end{eqnarray*}
	Now, for $f\in\mathscr{C}$, using integration-by-parts,
	\begin{eqnarray*}
		2 \int \sum_j \varphi_j g_j (u_{j+1}-\arru_j) f d\mu 
		&=&
		2\int \sum_k F_k(u_k-u_{k+1}) f d\mu\\
		&=&
		2\int \sum_k F_k (\partial_k-\partial_{k+1}) f d\mu\\
		&\leq&
		\int \sum_k\left\{ \alpha F_k^2 + \frac{1}{\alpha}((\partial_k-\partial_{k+1}) f)^2\right\}d\mu,
	\end{eqnarray*}
	by Young's inequality. Taking $\alpha=2/n$, we find that the above is bounded by
	\begin{eqnarray*}
		\frac{2}{n} \sum_k \int \sum_k F^2_k d\mu + \frac{n}{2} \sum_k \int((\partial_k-\partial_{k+1}) f)^2d\mu,
	\end{eqnarray*}
	which, thanks to the Kipnis-Varadhan inequality, shows that the left-hand-side of \eqref{eq:one-block} is bounded by
	\begin{eqnarray*}
		C\frac{t}{n} \sum_k \int F^2_k d\mu.
	\end{eqnarray*}
	Finally, as $g$ is centered,
	\begin{eqnarray*}
		\sum_k \int F^2_k d\mu \leq \sum_k \sum^{l-1}_{i=1} \varphi^2_{k-i} \int g^2d\mu \leq l \sqrt{n} \int g^2d\mu \cale_n(\varphi).
	\end{eqnarray*}
\end{proof}

We now state the second-order Boltzmann-Gibbs principle: let $Q(l,u)=(\arru_0)^2-\frac{1}{l}$,
\begin{proposition}\label{thm:2nd-BG} Let $l\geq 1$. There exists a constant $C>0$ such that
	\begin{eqnarray*}
		\esp\left[ \left|\int^t_0ds\sum_j \left\{ u_j(sn)u_{j+1}(sn)-\tau_jQ(l,u(sn))\right\} \varphi_j\right|^2\right]
		\leq
		C \frac{tl}{\sqrt{n}}\cale_n(\varphi)
	\end{eqnarray*}
\end{proposition}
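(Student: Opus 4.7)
The plan is to split the integrand into two centered summands via
\[
u_j u_{j+1}-\tau_j Q(l,u) \;=\; u_j(u_{j+1}-\arru_j) + \big[\arru_j(u_j-\arru_j)+\tfrac{1}{l}\big]
\]
and control each separately. The first summand fits Lemma \ref{thm:one-block} exactly with $g(u)=u_0$: this $g$ is centered, has $\|g\|_{L^2(\mu)}^2=1$, and its support $\{0\}$ is disjoint from $\{1,\dots,l\}$, so Lemma \ref{thm:one-block} bounds $\esp\big[\big|\int^t_0\sum_j\varphi_j u_j(u_{j+1}-\arru_j)(sn)\,ds\big|^2\big]$ by $C\frac{tl}{\sqrt n}\cale_n(\varphi)$. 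What remains is
\[
F:=\sum_j\varphi_j\big[\arru_j(u_j-\arru_j)+\tfrac{1}{l}\big],
\]
which is centered under $\mu$, and the Kipnis-Varadhan estimate \eqref{eq:KV} reduces its treatment to a bound on $\|F\|_{-1,n}^2$.

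To compute the $-1$-norm, let $f\in\mathscr{C}$ and set $\bar\partial_j:=\frac{1}{l}\sum_{k=1}^{l}\partial_{j+k}$. Two applications of the Gaussian integration-by-parts from Section \ref{sec:generator} give
\[
\int \arru_j u_j\, f\, d\mu = \int \partial_j \bar\partial_j f\, d\mu,\qquad \int (\arru_j)^2 f\, d\mu = \int (\bar\partial_j)^2 f\, d\mu + \tfrac{1}{l}\int f\, d\mu,
\]
the extra $\frac{1}{l}\int f\,d\mu$ arising from the diagonal contributions $\int u_{j+k}^2 f\,d\mu=\int f\,d\mu+\int\partial_{j+k}^2 f\,d\mu$. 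The $-\tfrac{1}{l}$ in the definition of $Q$ cancels this constant exactly, leaving $\int Ff\, d\mu=\sum_j \varphi_j\int \bar\partial_j(\partial_j-\bar\partial_j)f\, d\mu$. Expanding $\partial_j-\bar\partial_j=\sum_{m=0}^{l-1}\tfrac{l-m}{l}(\partial_{j+m}-\partial_{j+m+1})$ by the same telescoping used in the proof of Lemma \ref{thm:one-block}, converting the outer $\bar\partial_j$ back to $\arru_j$ by another Gaussian IBP, and relabelling $k=j+m$, one rewrites
\[
\int Ff\, d\mu = \sum_k \int (\partial_k-\partial_{k+1})f\cdot G_k\, d\mu,\qquad G_k:=\sum_{m=0}^{l-1}\tfrac{l-m}{l}\varphi_{k-m}\arru_{k-m}.
\]
Young's inequality with weight $n/2$ then yields $\|F\|_{-1,n}^2\leq \tfrac{2}{n}\sum_k\int G_k^2\,d\mu$. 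The covariance $\int \arru_a\arru_b\,d\mu=(l-|a-b|)_+/l^2\leq 1/l$, combined with the Cauchy-Schwarz bound $\sum_k|\varphi_{k-m}\varphi_{k-m'}|\leq \sqrt n\,\cale_n(\varphi)$ and the simple estimate $\sum_{m=0}^{l-1}\tfrac{l-m}{l}=(l+1)/2\leq l$, gives $\sum_k\int G_k^2\,d\mu\leq C\,l\,\sqrt n\,\cale_n(\varphi)$, so $\|F\|_{-1,n}^2\leq C\frac{l}{\sqrt n}\cale_n(\varphi)$. Inserted into \eqref{eq:KV}, this bounds the contribution of $F$ to the left-hand side by $C\frac{tl}{\sqrt n}\cale_n(\varphi)$, and summing with the Lemma \ref{thm:one-block} contribution completes the proof.

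The one delicate ingredient is the exact cancellation of the constant $\tfrac{1}{l}\int f\,d\mu$ between the two integration-by-parts identities: this is precisely why $Q$ is defined with the $-\tfrac{1}{l}$ renormalisation. With any other constant, $\int Ff\,d\mu$ would acquire a term proportional to $\int f\,d\mu$ that cannot be absorbed by the Dirichlet form $\sum_k \int((\partial_k-\partial_{k+1})f)^2\,d\mu$, and the variational bound on $\|F\|_{-1,n}^2$ would diverge in $n$. Once this algebraic point is spotted, the telescoping gradient, Young's inequality, and covariance computation are all routine.
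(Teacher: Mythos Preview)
Your proof is correct and follows essentially the same approach as the paper: the identical decomposition $u_ju_{j+1}-\tau_jQ(l,u)=u_j(u_{j+1}-\arru_j)+[\arru_j(u_j-\arru_j)+\tfrac1l]$, Lemma~\ref{thm:one-block} for the first piece, and for the second piece the same integration-by-parts identity $\int[\arru_j(u_j-\arru_j)+\tfrac1l]f\,d\mu=\int\sum_{i=0}^{l-1}\psi_i\,\arru_j(\partial_{j+i}-\partial_{j+i+1})f\,d\mu$ followed by Young's inequality and Kipnis--Varadhan. Your route to that identity (two Gaussian IBP's to reach $\bar\partial_j(\partial_j-\bar\partial_j)f$, then converting $\bar\partial_j$ back to $\arru_j$) is slightly more circuitous than the paper's single IBP, and you relabel before applying Young rather than after, but the algebra and the final bound are the same.
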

\begin{proof}
	We use the factorization
	\begin{eqnarray*}
		u_j u_{j+1}-\tau_j Q(l,u)
		&=&
		u_j (u_{j+1}-\arru_j) + \arru_j (u_j-\arru_j)+\frac{1}{l}.
	\end{eqnarray*}
	We handle the first term with Lemma \ref{thm:one-block}. The second term is treated in the following lemma.
\end{proof}

\begin{lemma} Let $l\geq 1$. There exists a constant $C>0$ such that
	\begin{eqnarray*}
		\esp\left[ \left| \int^t_0ds\sum_j \left\{ \arru_j(sn)[u_j(sn)-\arru_j(sn)]+\frac{1}{l}\right\}\varphi_j\right|^2\right]
		\leq
		C \frac{tl}{\sqrt{n}}\cale_n(\varphi)
	\end{eqnarray*}
\end{lemma}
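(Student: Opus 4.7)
The plan is to follow the template of Lemma~\ref{thm:one-block}: apply the Kipnis--Varadhan inequality \eqref{eq:KV} to reduce the claim to a bound on the variational norm $\|F\|_{-1,n}^2$ of the integrand
$$F(u) = \sum_j \bigl\{\arru_j(u_j - \arru_j) + \tfrac{1}{l}\bigr\}\varphi_j,$$
and to that end I would rewrite, for any test function $f \in \mathscr{C}$, the pairing $\int F f\,d\mu$ as a sum of discrete gradients $(\partial_{m+1} - \partial_m)f$ paired with an explicit coefficient, after which Young's inequality closes the estimate exactly as in Lemma~\ref{thm:one-block}. Note that $F$ is automatically centered under $\mu$, the constant $\tfrac{1}{l}$ in the statement being there precisely because $\int \arru_j^2\,d\mu = \tfrac{1}{l}$.

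The key step is the integration-by-parts rewriting. Since $\arru_j$ depends on $u_{j+1},\ldots,u_{j+l}$ only, one has $\partial_j \arru_j = 0$ and $\partial_{j+k}\arru_j = \tfrac{1}{l}$ for $1\leq k \leq l$; applying Gaussian IBP to each term of $F$ yields
$$\int \arru_j u_j f\,d\mu = \int \arru_j\,\partial_j f\,d\mu,\qquad \int \bigl[\arru_j^2 - \tfrac{1}{l}\bigr]f\,d\mu = \frac{1}{l}\sum_{k=1}^l \int \arru_j\,\partial_{j+k} f\,d\mu,$$
where the $\tfrac{1}{l}$ on the left-hand side of the second identity is exactly what absorbs the boundary term coming from differentiating $\arru_j$. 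Subtracting, using the telescopic identity $\partial_j - \partial_{j+k} = -\sum_{i=0}^{k-1}(\partial_{j+i+1} - \partial_{j+i})$, and changing the summation index to $m = j+i$, one obtains
$$\int F f\,d\mu = -\sum_m \int G_m\,(\partial_{m+1} - \partial_m)f\,d\mu,\qquad G_m := \frac{1}{l}\sum_{i=0}^{l-1}(l-i)\,\varphi_{m-i}\,\arru_{m-i},$$
which is the same discrete-gradient form as in Lemma~\ref{thm:one-block}.

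From here the remainder is a repeat of the last lines of that lemma: Young's inequality with $\alpha = 2/n$ gives $\|F\|_{-1,n}^2 \leq \tfrac{2}{n}\sum_m \int G_m^2\,d\mu$, and the elementary bounds $(\sum a_i)^2 \leq l\sum a_i^2$, $(l-i)^2 \leq l^2$ together with $\int \arru_j^2\,d\mu = \tfrac{1}{l}$ produce $\int G_m^2\,d\mu \leq \sum_{i=0}^{l-1}\varphi_{m-i}^2$. Summing over $m$ yields $\sum_m \int G_m^2\,d\mu \leq l\sqrt{n}\,\cale_n(\varphi)$, hence $\|F\|_{-1,n}^2 \leq 2l/\sqrt{n}\,\cale_n(\varphi)$, and plugging into \eqref{eq:KV} gives the announced $tl/\sqrt{n}\,\cale_n(\varphi)$ rate. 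The only conceptual point, and the one I expect to be the main obstacle for a reader, is recognizing that the $\tfrac{1}{l}$ in the statement is the precise compensator needed to turn the second IBP identity into a pure gradient expression; the other Gaussian IBP on $\arru_j u_j$ is automatic because $\arru_j$ is $\partial_j$-invariant, so once the gradient structure is in place everything else reduces to the bookkeeping already done in Lemma~\ref{thm:one-block}.
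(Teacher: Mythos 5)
Your proposal is correct and follows essentially the same route as the paper: reduce to the Kipnis--Varadhan variational norm, use the Gaussian integration by parts together with the telescoping weights $\psi_i=\tfrac{l-i}{l}$ to write the integrand as a discrete gradient (with the $\tfrac1l$ cancelling the $i=0$ boundary term, exactly as you identified), and close with Young's inequality and $\int(\arru_j)^2\,d\mu=\tfrac1l$. The only cosmetic difference is that you collect the coefficients of each gradient $(\partial_{m+1}-\partial_m)f$ into a single $G_m$ before applying Young's inequality, whereas the paper applies Young's term by term with $\alpha=2l/n$; both yield the same $\tfrac{l}{\sqrt n}\cale_n(\varphi)$ bound.
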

\begin{proof}
	Let $\psi_i=\frac{l-i}{l}$. Then,
	\begin{eqnarray*}
		\arru_j [u_j-\arru_j] = \sum^{l-1}_{i=0} \psi_i(u_{j+i}-u_{j+i+1})\arru_j.
	\end{eqnarray*}
	For $f\in\mathscr{C}$, using integration-by-parts,
	\begin{eqnarray*}
	\int \arru_j[u_j-\arru_j]fd\mu 
	&=&
	\int \sum^{l-1}_{i=0} \psi_i(u_{j+i}-u_{j+i+1})\arru_jfd\mu \\
	&=&
	\int \left\{\sum^{l-1}_{i=0} \psi_i \arru_j(\partial_{j+i}-\partial_{j+i+1})f-\frac{1}{l}f \right\}d\mu
	\end{eqnarray*}
	The second summand comes from the term $i=0$. Hence,
	\begin{eqnarray*}
		2\int \sum_j \varphi_j \left\{ \arru_j[u_j-\arru_j] + \frac{1}{l}\right\}f d\mu
		=
		2\int \sum_j \varphi_j\sum^{l-1}_{i=0} \psi_i \arru_j(\partial_{j+i}-\partial_{j+i+1})fd\mu
	\end{eqnarray*}
	By Young's inequality, this last expression is bounded by
	\begin{eqnarray*}
		&&\int \sum_j \sum^{l-1}_{i=0} \left\{
			\alpha \varphi_j^2(\arru_j)^2+\frac{1}{\alpha} \psi_i^2((\partial_{j+i}-\partial_{j+i+1})f)^2
				\right\}d\mu\\
		&\leq&
		\alpha l \int \sum_j  \varphi_j^2(\arru_j)^2 d\mu+\frac{l}{\alpha}\int \sum_j ((\partial_{j}-\partial_{j+1})f)^2d\mu 
	\end{eqnarray*}
	Taking $\alpha=2l/n$, this is further bounded by
	\begin{eqnarray*}
		&&\frac{2l^2}{n} \int (\arru_j)^2 d\mu \sum_j \varphi_j^2+\frac{n}{2}\int \sum_j ((\partial_{j}-\partial_{j+1})f)^2d\mu\\
		&\leq&
		\frac{l}{\sqrt{n}} \cale_n(\varphi)+\frac{n}{2} \int \sum_j ((\partial_{j}-\partial_{j+1})f)^2d\mu.
	\end{eqnarray*}
	The result then follows from the Kipnis-Varadhan inequality.
\end{proof}

\section{Tightness}\label{sec:tightness}
In the sequel, we let $\varphi\in \mathcal{S}$ be a test function. Remember the fluctuation field is given by
\begin{eqnarray*}
	\field^n_t(\varphi) = \frac{1}{n^{1/4	}}\sum_j u_j(nt)\phin_j.
\end{eqnarray*}
Recalling the definition of the operators $S$ and $A$ from Section \ref{sec:generator}, the symmetric and anti-symmetric parts of the dynamics are given by
\begin{eqnarray*}
	d\spart^n_t(\varphi) &= n S \field^n_t(\varphi) dt
								  &= \frac{1}{n^{1/4	}} n \sum_j u_j(tn) \Delta \phin_j dt= \frac{1}{n^{1/4	}} \sum_j u_j(tn) \Delta^n \phin_j dt\\
	d\apart^n_t(\varphi) &= n A \field^n_t(\varphi) dt
								   &= - \frac{1}{n^{1/2	}}n \sum_j w_j(tn) (\phin_{j+1}-\phin_j)dt =  \sum_j w_j (tn)\nabla^n \phin_j dt
\end{eqnarray*}
where we used $\gamma=n^{-1/4}$. Then, the martingale part of the dynamics corresponds to
\begin{eqnarray*}
	\mart^n_t(\varphi) &=& \field^n_t(\varphi) -\field^n_0(\varphi)-\spart^n_t(\varphi)-\apart^n_t(\varphi)
		= n^{1/4} \int^t_0 \sum_j (\varphi_{j}-\varphi_{j+1})d\xi_j(s)
\end{eqnarray*}
and has quadratic variation
\begin{eqnarray*}
	\langle \mart^n(\varphi) \rangle_t = n^{1/2} t\sum_j (\phin_{j}-\phin_{j+1})^2 = t \cale_n(\nabla^n \phin)
\end{eqnarray*}
We will use Mitoma's criterion \cite{Mitoma}: a sequence $\mathcal{Y}^n$ is tight in $C([0,T],\mathcal{S}'(\re))$ if and only if $\mathcal{Y}^n(\varphi)$ is tight in $C([0,T],\re)$ for all $\varphi\in\mathcal{S}(\re)$.
\subsection{Martingale term}
We recall that $\langle\mart^n(\varphi) \rangle = t \cale_n(\gradphin)$. From the Burkholder-Davis-Gundy inequality, it follows that
	\begin{eqnarray*}
		\esp\left[\left| \mart^n_t(\varphi)-\mart^n_s(\varphi)\right|^p\right]
		\leq C
		|t-s|^{p/2}\cale_n(\gradphin)^{p/2},
	\end{eqnarray*}
for all $p\geq 1$. Tightness then follows from Kolmogorov criterion by taking $p$ large enough.


\subsection{Symmetric term}
Tightness is obtained via a second moment computation and Kolmogorov criterion:
\begin{eqnarray*}
	\esp\left[\left| \spart^n_t(\varphi)-\spart^n_s(\varphi)\right|^2\right]
	&\leq&
	|t-s|^2 \frac{1}{\sqrt{n}} \sum_j \esp[u_j^2] (\Delta^n \phin_j)^2
	=|t-s|^2\cale_n(\Delta^n\phin).
\end{eqnarray*}

\subsection{Anti-symmetric term}
We study the tightness of the term
\begin{eqnarray*}
	\apart^n_t(\varphi) &=& \int^t_0 \sum_j w_j(sn) \gradphin_j ds\\
			&=& \int^t_0 \sum_j \frac13[u_{j+1}^2(sn)+u_j(sn)u_{j+1}(sn)+u_j^2(sn)]\gradphin_j ds.
\end{eqnarray*}

We begin with a lemma:
\begin{lemma}
	The process
	\begin{eqnarray*}
		Y^n_t(\varphi)=\int^t_0ds \sum_j\varphi_j \left\{(u_j(sn)u_{j+1}(sn)-u_j^2(sn))+1\right\}
	\end{eqnarray*}
	goes to zero in the ucp topology.
\end{lemma}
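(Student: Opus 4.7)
The plan is to apply the Kipnis-Varadhan inequality \eqref{eq:KV} to the zero-mean cylinder functional
$$F(u)=\sum_j \varphi_j\bigl(u_j u_{j+1}-u_j^2+1\bigr),$$
and to show that $\|F\|_{-1,n}^2\leq \tfrac{C}{\sqrt n}\,\cale_n(\varphi)$. Once this bound is established, \eqref{eq:KV} yields
$$\esp\Bigl[\sup_{0\leq t\leq T}|Y^n_t(\varphi)|^2\Bigr]\leq \frac{CT}{\sqrt n}\,\cale_n(\varphi),$$
which tends to $0$ since $\cale_n(\varphi)\to\cale(\varphi)$ is bounded, giving ucp convergence.

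The crucial algebraic step is to expose a discrete gradient hidden inside $F$ by means of the Gaussian integration-by-parts formula $\int u_j g\,d\mu = \int\partial_j g\,d\mu$. Writing $u_j u_{j+1}-u_j^2=u_j(u_{j+1}-u_j)$ and applying IBP once with $g=(u_{j+1}-u_j)f$ gives
$$\int u_j(u_{j+1}-u_j)f\,d\mu = -\int f\,d\mu+\int(u_{j+1}-u_j)\partial_j f\,d\mu,$$
so that the $+1$ in $F$ exactly cancels the $-f$ contribution and
$$\int F f\,d\mu=\int\sum_j \varphi_j(u_{j+1}-u_j)\partial_j f\,d\mu.$$
Applying Gaussian IBP to each of $u_{j+1}\partial_j f$ and $u_j\partial_j f$, and then reversing one IBP on the remaining $\partial_j$, rewrites this as
$$\int F f\,d\mu=\int\sum_j \varphi_j\, u_j(\partial_{j+1}-\partial_j)f\,d\mu,$$
which now displays the discrete gradient $(\partial_{j+1}-\partial_j)f$ that appears in the Dirichlet form.

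From here, Young's inequality with weight $\alpha=2/n$, together with $\int u_j^2\,d\mu=1$, gives
$$2\int F f\,d\mu\leq \frac{2}{n}\sum_j\varphi_j^2+\frac{n}{2}\sum_j\int\bigl((\partial_{j+1}-\partial_j)f\bigr)^2 d\mu,$$
so the variational formula from Section \ref{sec:BG} yields $\|F\|_{-1,n}^2\leq \tfrac{2}{n}\sum_j\varphi_j^2=\tfrac{2}{\sqrt n}\,\cale_n(\varphi)$, as desired.

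The only real obstacle is the observation that, thanks to Gaussian IBP, the seemingly unrelated combination $u_j u_{j+1}-u_j^2+1$ is of the form $u_j\cdot(\text{discrete gradient of }f)$ once tested against $f$: the constant $+1$ is essential for this cancellation, and without it the Kipnis-Varadhan variational estimate would not produce the crucial factor of $n^{-1/2}$ needed for the limit to vanish.
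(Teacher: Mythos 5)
Your proposal is correct and follows essentially the same route as the paper: both reduce to the identity $\int F f\,d\mu=\int\sum_j\varphi_j\,u_j(\partial_{j+1}-\partial_j)f\,d\mu$ via Gaussian integration by parts (the paper gets there in one step by integrating the factor $u_{j+1}-u_j$ against $u_jf$, while you take a slightly longer chain of IBPs), and then both apply Young's inequality with $\alpha=2/n$ and the Kipnis--Varadhan inequality to obtain the bound $CT\,\cale_n(\varphi)/\sqrt{n}$.
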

\begin{proof}
Using integration by parts,
\begin{eqnarray*}
	\int \sum_j\varphi_j (u_ju_{j+1}-u_j^2) fd\mu
	&=&
	\int \sum_j\varphi_j (u_{j+1}-u_j) u_j fd\mu\\
	&=&
	\int \sum_j\varphi_j (\partial_{j+1}-\partial_j)(u_j f)d\mu\\
	&=&
	\int \sum_j\varphi_j \left\{u_j (\partial_{j+1}-\partial_j) f-f\right\}
\end{eqnarray*}
Hence,
\begin{eqnarray*}
	\int \sum_j\varphi_j \left\{(u_ju_{j+1}-u_j^2)+1\right\} fd\mu
	&=&
	\int \sum_j\varphi_j u_j (\partial_{j+1}-\partial_j) fd\mu
\end{eqnarray*}
Using Young's inequality,
\begin{eqnarray*}
	2\int \sum_j\varphi_j \left\{(u_ju_{j+1}-u_j^2)+1\right\} fd\mu
	&\leq&
	\int \sum_j\left\{ \alpha \varphi_j^2 u_j^2+ \frac{1}{\alpha} ((\partial_{j+1}-\partial_j) f)^2\right\}d\mu\\
	&\leq&
	\frac{2}{\sqrt{n}} \mathcal{E}_n(\varphi) + \frac{n}{2}\sum_j \int ((\partial_{j+1}-\partial_j) f)^2d\mu,
\end{eqnarray*}
by taking $\alpha=2/n$. Into the Kipnis-Varadhan inequality, this yields
\begin{eqnarray*}
	\esp\left[\sup_{0\leq t \leq T}\left| \int^t_0ds \sum_j\varphi_j \left\{(u_j(sn)u_{j+1}(sn)-u_j^2(sn))+1\right\}\right|^2\right]
	&\leq&
	\frac{C T}{\sqrt{n}} \mathcal{E}_n(\varphi) 
\end{eqnarray*}
which shows that this process goes to zero in the ucp topology.
\end{proof}
This means we can switch the term $w_j$ in the anti-symmetric part of the dynamics by $u_j u_{j+1}$ modulo a vanishing term. Note that, as we apply the previous lemma to a gradient, the constant term $1$ will disappear. We are then left to prove the tightness of
\begin{eqnarray*}
	\mpart^n_t(\varphi) &=& \int^t_0 \sum_j u_j(sn)u_{j+1}(sn)\gradphin_jds.
\end{eqnarray*}
From Proposition \ref{thm:2nd-BG}, we have
\begin{eqnarray*}
	\esp\left[ \left| \mpart^n_t(\varphi)-\int^t_0 \sum_j \tau_j Q(l,u(sn))\gradphin_j ds \right|^2\right]
	\leq C \frac{tl}{\sqrt{n}} \cale_n(\gradphin)
\end{eqnarray*}
where, here and below, $C$ denotes a constant which value can change from line to line.
On the other hand, a careful $L^2$ computation, taking dependencies into account, shows that
\begin{eqnarray*}
\esp\left[ \left| \int^t_0 \sum_j \tau_j Q(l,u(sn))\gradphin_j ds\right|^2\right]
	&\leq&
	C
	\frac{t^2\sqrt{n}}{l}\cale_n(\gradphin).
\end{eqnarray*}
Observe that $\lim_{n\to \infty}\cale_n(\gradphin)=\int \partial_x\varphi(x)^2\,dx<\infty$.
Summarizing,
\begin{eqnarray*}
	\esp\left[ \left|\mpart^n_t(\varphi) \right|^2\right] \leq C \left\{ \frac{tl}{\sqrt{n}}+\frac{t^2\sqrt{n}}{l}\right\}.
\end{eqnarray*}
For $t\geq 1/n$, we take $l\sim \sqrt{tn}$ and get
\begin{eqnarray*}
	\esp\left[ \left|\mpart^n_t(\varphi) \right|^2\right] \leq C t^{3/2}.
\end{eqnarray*}
For $t\leq 1/n$, a crude $L^2$ bound gives
\begin{eqnarray*}
	\esp\left[ \left|\mpart^n_t(\varphi) \right|^2\right] \leq C t^2 \sqrt{n}\leq C t^{3/2}.
\end{eqnarray*}
This gives tightness.

\section{Convergence}\label{sec:convergence}
From the previous section, we get processes $\field$, $\spart$, $\apart$ and $\mart$ such that
\begin{eqnarray*}
	\lim_{n\to\infty}\field^n &=& \field, \phantom{blal}\lim_{n\to\infty} \spart^n = \spart,\\
	\lim_{n\to\infty} \apart^n &=& \apart, \phantom{bla} \lim_{n\to\infty} \mart^n = \mart,
\end{eqnarray*}
along a subsequence that we still denote by $n$. We will now identify these limiting processes.


\subsection{Convergence at fixed times}
A straightforward  adaptation of the arguments in \cite{DGP}, Section 4.1.1, shows that $\field^n_t$ converges to a white noise for each fixed time $t\in[0,T]$. This in turns proves that the limit satisfies property (S).


\subsection{Martingale term}
	The quadratic variation of the martingale part satisfies
	\begin{eqnarray*}
		\lim_{n\to\infty} \langle\mart^n(\varphi)\rangle_t = t || \partial_x \varphi ||^2_{L^2}.
	\end{eqnarray*}
	By a criterion of Aldous \cite{A}, this implies convergence to the white noise.

\subsection{Symmetric term}
A second moment bound shows that
\begin{eqnarray*}
	\esp\left[\left| \spart^n_t(\varphi)-\int^t_0\field^n_s(\partial_x^2\varphi)\,ds\right|^2\right]\leq C\frac{t^2}{n},
\end{eqnarray*}
which shows that
\begin{eqnarray*}
	\spart(\varphi) = \lim_{n\to\infty}\spart^n(\varphi) =\int^{\cdot}_0 \field_s(\partial^2_x \varphi)\,ds.
\end{eqnarray*}


\subsection{Anti-symmetric term}

We just have to identify the limit of the process $\mpart^n(\varphi)$.
Remembering the definition of the field $\field^n$, we observe that
\begin{eqnarray*}
	\sqrt{n}Q(\varepsilon\sqrt{n},u(nt)) = \field^n_t(i_{\varepsilon}(0))^2-\frac{1}{\varepsilon},
\end{eqnarray*}
from where we get the convergences
\begin{eqnarray*}
	\lim_{n\to\infty}\sqrt{n}Q(\varepsilon\sqrt{n},u(nt)) = \field_t(i_{\varepsilon}(0))^2-\frac{1}{\varepsilon}
\end{eqnarray*}
and
\begin{eqnarray*}
	\mathcal{A}^{\varepsilon}_{s,t}(\varphi) := \lim_{n\to\infty}\int^t_s \sum_j \tau_jQ(\varepsilon\sqrt{n},u(rn))\gradphin_j dr.
\end{eqnarray*}
The second limit follows by a suitable approximation of $i_{\varepsilon}(x)$ by $\mathcal{S}(\re)$ functions (see \cite{GJ-arma}, Section 5.3 for details). Now, by the second-order Boltzmann-Gibbs principle and stationarity,
\begin{eqnarray*}
	\esp\left[ \left| \mpart^n_t(\varphi)-\mpart^n_s(\varphi)-\int^t_s \sum_j \tau_jQ(l,u(rn))\gradphin_j dr\right|^2\right]
	\leq C
	\frac{(t-s)l}{\sqrt{n}}.
\end{eqnarray*}
Taking $l\sim\varepsilon\sqrt{n}$ and the limit as $n\to\infty$ along the subsequence,
\begin{eqnarray}\label{eq:BA}
	\esp\left[ \left| \apart_t(\varphi)-\apart_s(\varphi)-\mathcal{A}^{\varepsilon}_{s,t}(\varphi)\right|^2\right]
	\leq C
	(t-s)\varepsilon.
\end{eqnarray}
The energy estimate (EC2) then follows by the triangle inequality. Theorem \ref{thm:S-EC-imply} yields the existence of the process
\begin{eqnarray*}
	\mathcal{A}_t(\varphi) = \lim_{\varepsilon\to0}\mathcal{A}_{0,t}(\varphi).
\end{eqnarray*}
Furthermore, from \eqref{eq:BA}, we deduce that $\apart=\mathcal{A}$.

It remains to check (EC1). It is enough to check that
\begin{eqnarray*}
	\esp\left[ \left| \int^t_0 \field^n_s(\partial^2_x\varphi)\right|^2\right]\leq \kappa t.
\end{eqnarray*}
Using the smoothness of $\varphi$ and a summation by parts, it is further enough to verify that
\begin{eqnarray}\label{eq:ec2-1}
	\esp\left[ \left| \int^t_0 n^{1/4}\sum_j [u_{j+1}(sn)-u_j(sn)] \gradphin_j\right|^2\right]\leq \kappa t.
\end{eqnarray}
For that purpose, we will use Kipnis-Varadhan inequality one last time: let $f\in\mathscr{C}$,
\begin{eqnarray*}
	2\int n^{1/4} \sum_j (u_{j+1}-u_j)\gradphin_j f d\mu  
	&=&
	2\int n^{1/4} \sum_j \gradphin_j(\partial_{j+1}-\partial_j) f d\mu\\
	&\leq&
	\sum_j \left\{ \alpha\sqrt{n}(\gradphin_j)^2+\frac{1}{\alpha}\int ((\partial_{j+1}-\partial_j) f)^2d\mu \right\}\\
	&\leq&
	2 \cale_n(\gradphin) + \frac{n}{2}\sum_j \int ((\partial_{j+1}-\partial_j) f)^2d\mu,
\end{eqnarray*}
with $\alpha=2/n$, from where \eqref{eq:ec2-1} follows.
\appendix

\section{Construction of the dynamics}\label{sec:construction}
The system of equations \eqref{eq:main-eq} can be reformulated as
\begin{eqnarray*}
	u_j(t) = \frac12 \int^t_0 \Delta u_j(s)\, ds + \gamma \int^t_0 B_j(u(s))\, ds + \xi_j(t)-\xi_{j-1}(t).
\end{eqnarray*}
We consider the system $u^M$ on $\ze_M=\ze\slash M\ze$ evolving under its invariant distribution. 
We first check that, for all $j$ and $T>0$ 
\begin{eqnarray*}
	\esp\left[\sup_{0\leq t \leq T} |u^M_j(t)|^2\right]<\infty,
\end{eqnarray*}
so that the dynamics is well-defined.
Everything boils down to estimates of type
\begin{eqnarray*}
	\esp\left[ \sup_{0\leq t \leq T} \left| \int^t_0 u^M_j(s)\, ds \right|^2\right]
		&\leq& T \esp\left[ \sup_{0\leq t \leq T} \int^t_0|u_j^M(s)|^2ds\right]\\
		&\leq& T \esp\left[\int^T_0|u_j^M(s)|^2ds\right]\\
		&\leq& T^2,
\end{eqnarray*}
where we used invariance in the last step.

Next, we show tightness of the processes (in $M$) where we now identify $u^M$ with a periodic system on the line. This follows from Kolmogorov's criterion. It is enough to control expressions of type
\begin{eqnarray*}
	\esp\left[ \left| \int^t_s u^M_j(r)\, dr \right|^4\right]
		&\leq& |t-s|^3\esp\left[  \int^t_s \left| u^M_j(r)\right|^4\, dr \right]
		\leq C |t-s|^3.
\end{eqnarray*}
Together with a standard estimate on the increments of the Brownian motion, this yields
\begin{eqnarray*}
	\esp\left[ |u^M_j(t)-u^M_j(s)|^2\right] \leq C |t-s|^2.
\end{eqnarray*}
Hence, each coordinate is tight. By diagonalization, we can extract a subsequence of $M_k$ such that $(u_j^{M_k})$ converges in law in $C[0,T]$ for each $j$. This gives a meaning to the system \eqref{eq:main-eq}.


\end{document}